\documentclass[12pt]{article}
\usepackage{amsthm, amsmath, amssymb, amsfonts, latexsym}
\date{}

\title{On symmetric systems of transport equations}
\author{Evgeny Yu. Panov \\
St. Petersburg Department of V.\,A.~Steklov Institute \\ of Mathematics,
St. Petersburg, Russia; \\
Yaroslav-the-Wise Novgorod State University, \\ Veliky Novgorod, Russia}

\theoremstyle{plain}
\newtheorem{theorem}{Theorem}[section]
\newtheorem{lemma}{Lemma}[section]

\theoremstyle{definition}
\newtheorem{definition}{Definition}[section]
\newtheorem{remark}{Remark}[section]
\newtheorem{example}{Example}[section]
\numberwithin{equation}{section}

\newcommand{\R}{{\mathbb R}}

\newcommand{\D}{{\mathcal D}}
\newcommand{\const}{\mathrm{const}}
\newcommand{\supp}{\mathop{\rm supp}}
\newcommand{\codim}{\mathop{\rm codim}}
\renewcommand{\div}{\mathop{\rm div}\nolimits}
\renewcommand{\Im}{\mathop{\rm Im}}
\begin{document}
\maketitle

\begin{abstract}
We study a symmetric system of transport equations with solenoidal coefficients. This system reduces to an evolutionary equation with a skew-symmetric spatial operator in the real Hilbert space of square-integrable vector-functions, and by general results we claim that there always exists a generalized solution of the Cauchy problem. Uniqueness of this solution is equivalent to skew-adjointness of the spatial operator. We demonstrate that in the case of locally Lipschitz coefficients satisfying a linear growth condition the spatial transport operator is indeed skew-adjoint. For scalar transport equation this result remains true under the weaker DiPerna-Lions conditions.
\end{abstract}

\section{Introduction}
\small
In the half-space $\Pi=\R_+\times\R^n$, where $\R_+=(0,+\infty)$, we consider Cauchy problem for a symmetric system of transport equations
\begin{equation}\label{1}
u_{it}+\sum_{k=1}^n\sum_{j=1}^m a^k_{ij}(x)u_{jx_k}=0, \quad i=1,\ldots,m,
\end{equation}
with the initial conditions
\begin{equation}\label{2}
u_i(0,x)=u_{0i}(x), \quad i=1,\ldots,m.
\end{equation}
The coefficients $a^k_{ij}(x)\in L^2_{loc}(\R^n)$ are supposed to satisfy the symmetricity condition  $a^k_{ij}(x)=a^k_{ji}(x)$, $i,j=1,\ldots,m$, $k=1,\ldots,n$, and the solenoidality condition
\begin{equation}\label{sol}
\sum_{k=1}^n (a^k_{ij})_{x_k}(x)=0
\end{equation}
in the sense of distributions on $\R^n$ (in $\D'(\R^n)$).

In the case of regular coefficients having bounded derivatives up to the second order, the notion of weak solution (belonging to a certain Sobolev class) to problem \eqref{1}, \eqref{2} is well-known, see \S~7.3.2 in Evans' book \cite{Evans}, where existence and uniqueness of weak solutions are proved. In this regular case the solenoidality
condition is not required. In the present paper we discuss the case of rough coefficients and introduce the weaker notion of generalized solution (understood in the distributional sense), so the solenoidality condition plays an important role in our analysis.

We introduce the symmetric matrices $A^k$, $k=1,\ldots,n$, with entries $a^k_{ij}(x)$, $i,j=1,\ldots,m$. In view of solenoidality condition (\ref{sol}) we can write system \eqref{1} in the conservative form
$$
u_t+\sum_{k=1}^n (A^k(x)u)_{x_k}=0, \quad u=(u_1,\ldots,u_m),
$$
and define the notion of a generalized solution (g.s.) of problem \eqref{1}, \eqref{2}.

\begin{definition}\label{def1}
A vector-function  $u=(u_1(t,x),\ldots,u_m(t,x))\in L^\infty(\R_+,L^2(\R^n,\R^m))$ ia called a g.s. to problem \eqref{1}, \eqref{2} if for each test vector $f=f(t,x)\in C_0^1([0,+\infty)\times\R^n,\R^m)$
\[
\int_{\Pi} \bigl[u\cdot f_t+\sum_{k=1}^n A^k(x)u\cdot f_{x_k}\bigr]dtdx+ \int_{\R^n}u_0(x)\cdot f(0,x)dx=0.
\]
Here $u_0(x)=(u_{01}(x),\ldots,u_{0m}(x))\in L^2(\R^n,\R^m)$ is the initial vector-function, and $p\cdot q$ denotes the scalar multiplication of vectors $p,q\in\R^m$.
\end{definition}

We define an unbounded linear operator $S_0$ in the real Hilbert space $H=L^2(\R^n,\R^m)$ by the equality
\[S_0u=\sum_{k=1}^n A^k(x)u_{x_k}(x), \quad u\in D(S_0)=C_0^1(\R^n,\R^m).\]
By simmetricity and solenoidality of the coefficients, for all vectors $u,v\in D(S_0)$
\begin{align*}
(S_0u,v)_H+(u,S_0v)_H=\int_{\mathbb{R}^n}\sum_{k=1}^n\sum_{i,j=1}^m a^k_{ij}(x)(u_{jx_k}v_i+u_j v_{ix_k})dx= \\
\sum_{i,j=1}^m\int_{\R^n}\sum_{k=1}^n a^k_{ij}(x)(u_jv_i)_{x_k}dx=0.
\end{align*}
Hence, the operator $S_0$ is skew-symmetric. In particular, this operator admits a closure $S$. Let $S^*=S_0^*$ be the adjoint operator of $S$. Then, by the skew-symmetricity, $-S\subset S^*$. It is rather well-known (see for instance \cite{Phil,Kr}) that a skew-symmetric operator $-S$ admits a maximal dissipative extension $G$. Moreover, $-S\subset G\subset S^*$ (cf. \cite[Lemma 1.1.5]{Phil}) and the operator $G$ generates the contractive $C_0$-semigroup $T_t=e^{tA}$, $t\ge 0$, on $H$. By \cite[Theorem~2.1]{Zap} the trajectory $u(t,x)=(T_tu_0)(x)$, $u_0\in H$, is a g.s. to problem \eqref{1}, \eqref{2}. In particular, a g.s. of this problem always exists. Moreover, for the constructed g.s. the energy $\|u(t,\cdot)\|_H$ decreases in time. As was also demonstrated in \cite[Corollary~3.1]{Zap}, uniqueness of g.s. (for both forward and backward Cauchy problem)
holds if and only if the operator $S$ is skew-adjoint, i.e., $-S=S^*$. Besides, in this case, $G=-S$, and the operators $e^{-tS}$, $t\in\R$, form a $C_0$-group of orthogonal operators. In particular, the unique g.s. $u(t,x)=(e^{-tS}u_0)(x)$ satisfies the conservation of energy property $\|u(t,\cdot)\|_H=\|u_0\|_H$.

In this paper we are going to prove that under some additional assumptions on the coefficients, the operator $S$ is indeed skew-adjoint, and this implies well-posedness of our problem.

Notice that in the vectorial case $m>1$ neither the method of characteristics nor the renormalization technique can be applied, and we are not able to utilize approaches of DiPerna and Lions \cite{DiL} known for the scalar case $m=1$. Instead, we use functional-analytic methods described above.

\section{Assumptions and preliminaries}\label{sec1}

We suppose that the coefficients $a^k_{ij}(x)\in W_{\infty,loc}^1(\R^n)$ (that is, they are locally Lipschitz functions) and have
at most linear growth at infinity. The latter condition can be written in the form
\begin{equation}\label{gr}
\|A^k(x)\|\le c(1+|x|), \quad k=1,\ldots,n, \ c=\const,
\end{equation}
where $\|A\|$ is the operator norm of a matrix $A$, and $|x|$ denotes the Euclidean norm of a finite-dimensional vector $x$.

By the definition of adjoint operator, $D(S^*)$ consists of such $u\in H$ that for all $f\in D(S_0)=C_0^1(\R^n,\R^m)$ the relation $(u,Sf)_H=(v,f)_H$ holds for some $v\doteq S^* u\in H$. Revealing the relation $(u,Sf)_H=(v,f)_H$, we obtain that
for all $f\in C_0^1(\R^n,\R^m)$
\begin{equation}\label{3}
\int_{\R^n} \sum_{k=1}^n u(x)\cdot A^k(x)f_{x_k}(x)dx=\int_{\R^n} v(x)\cdot f(x)dx.
\end{equation}
Since $u(x)\cdot A^k(x)f_{x_k}(x)=A^k(x)u(x)\cdot f_{x_k}(x)$ by symmetricity of the matrices $A^k$, relation (\ref{3}) means that
\begin{equation}\label{S*}
S^*u=v=-\sum_{k=1}^n (A^k(x)u(x))_{x_k} \ \mbox{ in } \D'(\R^n).
\end{equation}
Hence, the domain $D(S^*)$ consists of such $u(x)\in H$ that the distribution $\displaystyle v=-\sum_{k=1}^n (A^k(x)u(x))_{x_k}$ lies in $H$, and we set the value $S^*u=v$ as indicated in (\ref{S*}).

We denote by $H_c$ the space of vector-functions in $H$ with compact support.

\begin{lemma}\label{lem1}
The space $H_c\cap D(S^*)$ is a core of $S^*$, that is, it is dense in the space $D(S^*)$ equipped with the graph norm of $S^*$.
\end{lemma}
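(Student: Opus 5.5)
Given $u\in D(S^*)$, I will approximate it by the truncations $u_R=\beta_R u$. Here $\beta_R(x)=\beta(x/R)$, with $\beta\in C_0^\infty(\R^n)$, $0\le\beta\le 1$, $\beta\equiv1$ on $|x|\le1$ and $\beta\equiv0$ on $|x|\ge2$. Clearly $u_R\in H_c$ and $u_R\to u$ in $H$ by dominated convergence. It remains to show two things: that $u_R\in D(S^*)$, and that $S^*u_R\to S^*u$ in $H$.

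\emph{Product rule.} By (\ref{S*}), in $\D'(\R^n)$ I compute
\[
-\sum_{k=1}^n (A^k\beta_R u)_{x_k}=\beta_R\Bigl(-\sum_{k=1}^n (A^k u)_{x_k}\Bigr)-\sum_{k=1}^n (\beta_R)_{x_k}A^k u .
\]
This is legitimate because multiplying a distribution by the smooth function $\beta_R$ obeys the Leibniz rule, and $A^ku\in L^1_{loc}$ since $A^k\in L^\infty_{loc}$ and $u\in L^2$. Concretely, I test (\ref{3}) with $\beta_R f$ for $f\in C_0^1$; that function is again in $C_0^1$. The first term on the right equals $\beta_R S^*u\in H$. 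The second term is bounded and compactly supported once I use the growth bound (\ref{gr}): on the support of $\nabla\beta_R$ we have $R\le|x|\le 2R$, and there
\[
|(\beta_R)_{x_k}|\,\|A^k(x)\|\le \frac{\|\nabla\beta\|_\infty}{R}\,c(1+2R)\le C .
\]
So $u_R\in D(S^*)$, and
\[
S^*u_R=\beta_R S^*u-\sum_k(\beta_R)_{x_k}A^ku .
\]

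\emph{Convergence.} First, $\beta_R S^*u\to S^*u$ in $H$ by dominated convergence. Second, the error term has pointwise norm at most $C\,|u(x)|\,\mathbf 1_{\{R\le|x|\le2R\}}$. Its $H$-norm is therefore at most $C\|u\|_{L^2(|x|\ge R)}$, which tends to $0$ as $R\to\infty$. Hence $u_R\to u$ in the graph norm of $S^*$.

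The only delicate point is the uniform bound on the commutator term $\nabla\beta_R\cdot A^k u$. This is exactly where the linear growth condition (\ref{gr}) is needed: the factor $1/R$ from $\nabla\beta_R$ compensates for growth of order $R$ in the coefficients on the annulus. The local Lipschitz property of the coefficients is not needed at this step; local boundedness suffices to justify the product rule.
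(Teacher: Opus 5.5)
Your proposal is correct and follows the paper's proof essentially line for line. You use the cutoff $u_R=\beta(x/R)u$, the Leibniz rule giving $S^*u_R=\beta_R S^*u-\sum_k(\beta_R)_{x_k}A^ku$, and the linear growth bound (\ref{gr}) so that the factor $1/R$ from $\nabla\beta_R$ cancels the $O(R)$ size of $A^k$ on the annulus, making the commutator term at most $C|u|\,\mathbf 1_{\{|x|\ge R\}}$, which tends to zero in $H$.
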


\begin{proof}
Assume that $u(x)\in D(S^*)$. We need to approximate $u(x)$ in the graph norm of $D(S^*)$ by compactly supported vector-functions in $D(S^*)$. Let $p(y)\in C_0^1(\R^n)$ be such a function that $p(y)=1$ for $|y|\le 1$, $p(y)=0$ for $|y|\ge 2$. We set $u_R(x)=p(x/R)u(x)$, where $R>1$. It is clear that $u_R\in H_c$.
Since $p(x/R)\in C_0^1(\R^n)$, the distribution $\displaystyle v_R\doteq -\sum_{k=1}^n (A^k(x)p(x/R)u(x))_{x_k}$ can be represented as
\begin{equation}\label{4}
v_R=-p(x/R)\sum_{k=1}^n (A^k(x)u(x))_{x_k}-\frac{1}{R}\sum_{k=1}^n p_{y_k}(x/R)A^k(x)u(x)=p(x/R)v(x)-w_R(x),
\end{equation}
where $v(x)=S^*u(x)\in H$, $\displaystyle w_R(x)=\frac{1}{R}\sum_{k=1}^n p_{y_k}(x/R)A^k(x)u(x)\in H$. We see that $v_R\in H$ and therefore $u_R\in D(S^*)$, $S^*u_R=v_R$ (see the description of $D(S^*)$ above). Thus, $u_R\in H_c\cap D(S^*)$. It is clear that $u_R=p(x/R)u(x)\to u(x)$, $p(x/R)v(x)\to v(x)$ in $H$ as $R\to\infty$. Let us show that $w_R(x)\to 0$ in $H$
as $R\to\infty$. Notice that $\supp \nabla_y p(x/R)$ lie in the annulus $R\le |x|\le 2R$ while $\|A^k(x)\|\le c(1+2R)\le 3cR$ in this annulus, by condition (\ref{gr}). Therefore,
\[
|w_R(x)|\le 3cn\|\nabla p\|_\infty |u(x)|\theta(|x|-R),
\]
where $\theta(s)$ is the Heaviside function. This estimate implies the desired relation $w_R\to 0$ in $H$ as $R\to\infty$.
It follows that $v_R\mathop{\to}\limits_{R\to\infty} v$ in $H$. Hence, $u_R\to u$, $S^*u_R\to S^*u$ in $H$, which means
convergence $u_R\to u$ in the graph norm. The proof is complete.
\end{proof}

We need a variant of the famous DiPerna-Lions commutation lemma
\cite[Lemma~II.1]{DiL}. Suppose $\rho(z)\in C_0^1(\R^n)$, $\supp\rho\subset B_1(0)=\{z\in\R^n | |z|\le 1\}$, $\rho(z)\ge 0$, $\int_{\R^n}\rho(z)dz=1$. For $u(x)\in L^1_{loc}(\R^n)$ we introduce the corresponding averaged functions
\[u_h(x)=u*\rho_h(x)=\int_{\R^n}\rho_h(x-y)u(y)dy,\]
where $h\in (0,1)$, $\rho_h(z)=h^{-n}\rho(z/h)$.

\begin{lemma}\label{lmDL}
Let $a(x)\in C(\R^n)$ satisfy the Lipschitz condition
${|a(x)-a(y)|\le L|x-y|}$ for all $x,y\in\R^n$; $u(x)\in L^p(\R^n)$, $1\le p<\infty$. Then
\begin{equation}\label{DL}
\frac{\partial}{\partial x_k}(au_h-(au)_h)(x)\mathop{\to}_{h\to 0} 0 \ \mbox{ in } L^p(\R^n)
\end{equation}
for all $k=1,\ldots,n$.
\end{lemma}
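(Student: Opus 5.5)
We follow the standard DiPerna--Lions scheme: first obtain a uniform operator bound on the commutator, then prove convergence on a dense class of smooth compactly supported functions, and finally pass to general $u$ by density. Set $T_h u=\partial_{x_k}(au_h-(au)_h)$. Expanding,
\[
T_h u(x)=a_{x_k}(x)u_h(x)+\int_{\R^n}\bigl(a(x)-a(y)\bigr)\,\partial_{x_k}\rho_h(x-y)\,u(y)\,dy-\bigl(a_{x_k}u\bigr)_h(x)\cdot 0,
\]
more precisely, since $(au)_h$ is smooth with $\partial_{x_k}(au)_h=\int a(y)u(y)\partial_{x_k}\rho_h(x-y)dy$ and $a u_h$ is Lipschitz times smooth, we get
\[
T_hu(x)=a_{x_k}(x)u_h(x)+\int_{\R^n}\bigl(a(x)-a(y)\bigr)\partial_{x_k}\rho_h(x-y)u(y)\,dy .
\]
Here $a_{x_k}\in L^\infty$ with $\|a_{x_k}\|_\infty\le L$ by Rademacher's theorem.

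\emph{Step 1 (uniform bound).} The first term has $L^p$ norm at most $L\|u\|_p$ by Young's inequality. In the second term, $|a(x)-a(y)|\le L|x-y|\le Lh$ on the support of $\rho_h(x-\cdot)$. Moreover $|\partial_{x_k}\rho_h(z)|=h^{-n-1}|\rho_{z_k}(z/h)|$, so the kernel is bounded by $L\,h^{-n}|\nabla\rho(z/h)|$, whose $L^1$ norm equals $L\|\nabla\rho\|_{L^1}$. Young's inequality therefore gives $\|T_hu\|_p\le C L\|u\|_p$ with $C$ independent of $h$. Thus the $T_h$ are linear operators on $L^p$ with norms bounded uniformly in $h$.

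\emph{Step 2 (dense class).} Take $u\in C_0^1(\R^n)$, which is dense in $L^p$ because $p<\infty$. Then $au\in W^{1,\infty}$ has compact support, and
\[
T_hu=a_{x_k}u_h+a\,(u_{x_k})_h-(a_{x_k}u)_h-(a u_{x_k})_h .
\]
Since $u_h\to u$ uniformly and $(u_{x_k})_h\to u_{x_k}$ uniformly, all supported in a fixed compact set, we have $a_{x_k}u_h\to a_{x_k}u$ and $a(u_{x_k})_h\to au_{x_k}$ in $L^p$. Also $(a_{x_k}u)_h\to a_{x_k}u$ and $(au_{x_k})_h\to au_{x_k}$ in $L^p$, because mollification converges in $L^p$ for $L^p$ functions with $p<\infty$. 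Hence $T_hu\to 0$ in $L^p$. For the product rule on $au$, we use that Lipschitz functions are $W^{1,\infty}_{loc}$.

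\emph{Step 3 (density).} For general $u\in L^p$ and $\varepsilon>0$, choose $\varphi\in C_0^1$ with $\|u-\varphi\|_p<\varepsilon$. Then
\[
\|T_hu\|_p\le CL\varepsilon+\|T_h\varphi\|_p ,
\]
so $\limsup_{h\to0}\|T_hu\|_p\le CL\varepsilon$, and letting $\varepsilon\to0$ proves (\ref{DL}).

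The main obstacle is the justification in Step 1 of the distributional identity for $T_hu$. One has to verify that $\partial_{x_k}(a u_h)=a_{x_k}u_h+a\,\partial_{x_k}u_h$ holds for a Lipschitz $a$ and a $u_h$ that is only $C^1$ but not compactly supported. This follows from the product rule in $W^{1,1}_{loc}$. After that, the cancellation producing the factor $a(x)-a(y)$ is exact, and it is precisely what makes the bound uniform in $h$.
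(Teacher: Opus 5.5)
Your proof is correct, but it takes a different route from the paper. You use the classical DiPerna--Lions scheme:
\begin{itemize}
\item the identity $T_hu=a_{x_k}u_h+\int(a(x)-a(y))\,\partial_{x_k}\rho_h(x-y)\,u(y)\,dy$, which is the paper's $I_1^h+I_2^h$;
\item a uniform bound $\|T_h\|_{L^p\to L^p}\le CL$, coming from $|a(x)-a(y)|\le Lh$ on the support of the kernel;
\item convergence on $C_0^1$, via the product rule for $au$;
\item an $\varepsilon$-density argument.
\end{itemize}

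The paper never passes through a dense class. It shows directly that $I_2^h(x)\to -a_{x_k}(x)u(x)$ at every common Lebesgue point of $u$ and $\nabla a$. To do this it uses the representation $a(x)-a(y)=\int_0^1\nabla a(x+s(y-x))\cdot(x-y)\,ds$, and the change of variables $z=s(x-y)$ producing the auxiliary kernel $\rho_1$. It then upgrades a.e.\ convergence to $L^p$ convergence by Fatou's lemma, using the majorant $\omega^h$, which converges both in $L^p$ and a.e. In that argument $\omega^h$ plays the role of your uniform operator bound.

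What each buys:
\begin{itemize}
\item Your version is shorter. It avoids the Lebesgue-point analysis of $\nabla a$ and the a.e.\ validity of the segment formula. It is also the argument that extends to Sobolev coefficients $a\in W^{1,q}_{loc}$, as in DiPerna--Lions.
\item The paper's version is more hands-on. Since $u_h(x)\to u(x)$ at Lebesgue points of $u$, it also gives, at no extra cost, that the commutator derivative tends to $0$ a.e.\ along the whole family $h\to 0$. Density alone gives only $L^p$ convergence, hence a.e.\ convergence only along subsequences.
\end{itemize}

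The stray term $-(a_{x_k}u)_h\cdot 0$ in your first display is a harmless typo that you correct on the next line. The product rule you single out as the ``main obstacle'' is routine; the paper uses it without comment.
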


\begin{proof}
First of all we notice that by the Lipschitz condition the generalized derivatives $a_{x_k}(x)\in L^\infty(\R^n)$, $k=1,\dots,n$, and $\|\nabla a(x)\|_\infty\le L$. Since \[(au_h-(au)_h)(x)=h^{-n}\int_{\R^n}(a(x)-a(y))\rho((x-y)/h) u(y)dy,\]
there exist the generalized derivatives
\begin{align}\label{rel}
\frac{\partial}{\partial x_k}(au_h-(au)_h)(x)=h^{-n}\int_{\R^n}a_{x_k}(x)\rho((x-y)/h) u(y)dy+\nonumber\\ h^{-n-1}\int_{\R^n}(a(x)-a(y))\rho_{z_k}((x-y)/h) u(y)dy=I_1^h(x)+I_2^h(x).
\end{align}
The first term in this sum
\begin{equation}\label{rel1}
I_1^h(x)=a_{x_k}(x)\int_{\R^n}\rho_h(x-y) u(y)dy=a_{x_k}(x)u_h(x)\to a_{x_k}(x)u(x)
\end{equation}
as $h\to 0$ in $L^p(\R^n)$ because $u_h\mathop{\to}\limits_{h\to 0} u$ in $L^p(\R^n)$ by the known property of averaged functions while the derivative $a_{x_k}(x)\in L^\infty(\R^n)$.
Next, we estimate the term $I_2^h(x)$. Obviously,
\begin{align}\label{es1}
|I_2^h(x)|\le h^{-n-1}\int_{\R^n}|a(x)-a(y)||\rho_{z_k}((x-y)/h)| |u(y)|dy\le \nonumber\\ \omega^h(x)\doteq Lh^{-n}\int_{\R^n} h^{-1}|x-y||\rho_{z_k}(h^{-1}(x-y))| |u(y)|dy.
\end{align}
By the properties of averaged functions (with the kernel $|z||\rho_{z_k}(z)|$) $\omega^h(x)\in L^p(\R^n)$ and
\begin{equation}\label{rel2}
\omega^h(x)\to C|u(x)|
\end{equation}
as $h\to 0$ both in $L^p(\R^n)$ and a.e. in $\R^n$. Here $\displaystyle C=L\int_{\R^n} |z||\rho_{z_k}(z)|dz=\const$.
Now, let $x$ be a common Lebesgue point of the vector $\nabla a(y)$ and the function $u(y)$. Then
\begin{align}\label{rel2-1}
I_2^h(x)=h^{-n-1}\int_{\R^n}(a(x)-a(y))\rho_{z_k}(h^{-1}(x-y))(u(y)-u(x))dy+\nonumber\\ u(x)h^{-n-1}\int_{\R^n}(a(x)-a(y))\rho_{z_k}(h^{-1}(x-y))dy.
\end{align}
The first term in the right-hand part of (\ref{rel2-1}) is estimated as
\begin{align}\label{est2-1}
h^{-n-1}\left|\int_{\R^n}(a(x)-a(y))\rho_{z_k}(h^{-1}(x-y))(u(y)-u(x))dy\right|\le \nonumber\\
Lh^{-n}\int_{\R^n}h^{-1}|x-y||\rho_{z_k}(h^{-1}(x-y))||u(y)-u(x)|dy\mathop{\to}_{h\to 0} 0
\end{align}
since $x$ is a Lebesgue point of $u(y)$. We introduce the function ${\displaystyle J^h(x)=h^{-n-1}\int_{\R^n}(a(x)-a(y))\rho_{z_k}(h^{-1}(x-y))dy}$ and represent it in the form
\begin{align}\label{rel2-2}
J^h(x)=h^{-n-1}\int_{\R^n}(a(x)-a(y)-\nabla a(x)\cdot(x-y))\rho_{z_k}(h^{-1}(x-y))dy+\nonumber\\ h^{-n-1}\int_{\R^n}\nabla a(x)\cdot (x-y)\rho_{z_k}(h^{-1}(x-y))dy=\nonumber\\
h^{-n-1}\int_{\R^n}\int_0^1 (\nabla a(x+s(y-x))-\nabla a(x))\cdot(x-y)\rho_{z_k}(h^{-1}(x-y))dsdy+\nonumber\\ \nabla a(x)\cdot\int_{\R^n}z\rho_{z_k}(z)dz.
\end{align}
We utilize here the identity
\[
a(x)-a(y)=\int_0^1\nabla a(x+s(y-x))\cdot(x-y)ds,
\]
which holds for a.e. $y\in\R^n$.
To estimate the first term in the right-hand part of (\ref{rel2-2}), we make the change of variables $y\to z=s(x-y)$, resulting in
\begin{align}\label{est2-2}
h^{-n-1}\left|\int_{\R^n}\int_0^1 (\nabla a(x+s(y-x))-\nabla a(x))\cdot(x-y)\rho_{z_k}(h^{-1}(x-y))dsdy\right|\le\nonumber\\
h^{-n}\int_{\R^n} |\nabla a(x-z)-\nabla a(x)|\rho_1(z/h)dz,
\end{align}
where we denote
\[
\rho_1(y)=|y|\int_0^1s^{-n-1}|\rho_{z_k}(y/s)|ds.
\]
Remark that $\rho_1\ge 0$, $\supp\rho_1\subset B_1(0)$, 
\begin{align*}
\int_{\R^n}\rho_1(y)dy=\int_0^1\int_{\R^n} |s^{-1}y||\rho_{z_k}(s^{-1}y)|s^{-n}dyds= \\ \int_0^1\int_{\R^n}|z||\rho_{z_k}(z)|dz ds=\int_{\R^n}|z||\rho_{z_k}(z)|dz
\end{align*}
(we made the change $z=s^{-1}y$), and since $x$ is a Lebesgue point of the vector $\nabla a(y)$, we find
\[h^{-n}\int_{\R^n} |\nabla a(x-z)-\nabla a(x)|\rho_1(z/h)dz\mathop{\to}_{h\to 0} 0.\]
It follows from (\ref{rel2-2}) and (\ref{est2-2}) that
\begin{equation} \label{rel3}
J^h(x)\mathop{\to}_{h\to 0} \nabla a(x)\cdot\int_{\R^n}z\rho_{z_k}(z)dz=-a_{x_k}(x).
\end{equation}
Here, we apply the integration by parts formula
\[\int_{\R^n}z\rho_{z_k}(z)dz=-\int_{\R^n}\frac{\partial z}{\partial z_k}\rho(z)dz=-e_k, \]
where $e_k$ is the $k$th basis vector in $\R^n$.
In view of (\ref{rel2-1}), (\ref{est2-1}) it follows from (\ref{rel3}) that $I_2^h(x)\mathop{\to}\limits_{h\to 0} -a_{x_k}(x)u(x)$. By our choice, $x\in\R^n$ is an arbitrary point of a set of full Lebesgue measure. Therefore, $I_2^h(x)\to -a_{x_k}(x)u(x)$ as $h\to 0$ a.e. in $\R^n$. Further, we notice that
\begin{align*}
|I_2^h(x)+a_{x_k}(x)u(x)|^p\le 2^{p-1}(|I_2^h(x)|^p+|a_{x_k}(x)u(x)|^p)\le \\ 2^{p-1}((\omega^h(x))^p+|a_{x_k}(x)u(x)|^p).
\end{align*}
The left-hand side of this inequality converges as $h\to 0$ to zero a.e. in $\R^n$, while its right-hand side converges both in  $L^1(\R^n)$ and a.e. in $\R^n$, by relation (\ref{rel2}). Applying Fatou's lemma to the sequence
\[
2^{p-1}((\omega^h(x))^p+|a_{x_k}(x)u(x)|^p)-|I_2^h(x)+a_{x_k}(x)u(x)|^p,
\]
we derive that
\[
\lim_{h\to 0} \int_{\R^n}|I_2^h(x)+a_{x_k}(x)u(x)|^pdx=0,
\]
that is, $\displaystyle I_2^h(x)\mathop{\to}_{h\to 0} -a_{x_k}(x)u(x)$ in $L^p(\R^n)$. This, together with  (\ref{rel}), (\ref{rel1}), gives the required relation (\ref{DL}). The proof is complete.
\end{proof}

\section{Main results}\label{secM}

\begin{theorem}\label{thM}
Assume that the coefficients $a^k_{ij}(x)\in W^1_{\infty,loc}(\R^n)$ and satisfy the growth condition $(1+|x|)^{-1}a^k_{ij}(x)\in L^\infty(\R^n)$. Then the operator $S$ is skew-adjoint.
\end{theorem}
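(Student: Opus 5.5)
To prove that $S$ is skew-adjoint we show $S^*\subset -S$; the reverse inclusion $-S\subset S^*$ is already known from skew-symmetry. Since $S$ is closed, it suffices to show that every $u\in D(S^*)$ is the graph-norm limit of a sequence $f_l\in C_0^1(\R^n,\R^m)=D(S_0)$ with $-S_0f_l\to S^*u$ in $H$. Closedness then gives $u\in D(S)$ and $-Su=S^*u$. By Lemma~\ref{lem1} we may assume $u\in H_c\cap D(S^*)$, say $\supp u\subset B_R(0)$.

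For such $u$, mollify: $u_h=u*\rho_h$ componentwise. Then $u_h\in C_0^1(\R^n,\R^m)$ (since $\rho\in C_0^1$), $\supp u_h\subset B_{R+1}(0)$, and $u_h\to u$ in $H$. Since the growth condition is irrelevant on compact sets, we replace each $a^k_{ij}$ by $\tilde a^k_{ij}=\chi a^k_{ij}$, where $\chi\in C_0^1$ equals $1$ on $B_{R+2}(0)$. The functions $\tilde a^k_{ij}$ are globally Lipschitz and coincide with $a^k_{ij}$ on all supports involved. We then compute $-S_0u_h=-\sum_k A^k(u_h)_{x_k}$.

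Solenoidality gives $\sum_k A^k(u_h)_{x_k}=\sum_k (A^ku_h)_{x_k}$ in $\D'$. Hence
\[
-S_0u_h=-\sum_k (A^ku)_{x_k}*\rho_h-\sum_k\bigl(A^ku_h-(A^ku)_h\bigr)_{x_k}=(S^*u)_h-\sum_k\bigl(A^ku_h-(A^ku)_h\bigr)_{x_k}.
\]
The first term tends to $S^*u$ in $H$ because $S^*u\in H$. Each component of the commutator is $\sum_j\bigl(a^k_{ij}(u_j)_h-(a^k_{ij}u_j)_h\bigr)$. By Lemma~\ref{lmDL} with $p=2$, applied to $\tilde a^k_{ij}$ and $u_j\in L^2$, its $x_k$-derivative tends to $0$ in $L^2$. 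Therefore $-S_0u_h\to S^*u$ in $H$ as $h\to0$, which completes the argument.

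The main point needing care is the step where the mollified expression is rewritten in divergence form. We need $\sum_k (A^k u_h)_{x_k}=\sum_k A^k(u_h)_{x_k}$ with a product rule for Lipschitz times $C^1$ functions; this holds since $\sum_k(a^k_{ij})_{x_k}=0$ a.e. for Lipschitz coefficients. We also need that the identification $(A^ku)_{x_k}*\rho_h=\bigl((A^ku)_h\bigr)_{x_k}$ is legitimate. It is, because $A^ku\in L^2$ with compact support. Everything else is a direct application of Lemma~\ref{lem1} and Lemma~\ref{lmDL}.
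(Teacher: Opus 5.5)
Your proposal is correct and follows essentially the same route as the paper: reduce to compactly supported $u\in D(S^*)$ via Lemma~\ref{lem1} (where the growth condition enters), mollify, replace the coefficients by globally Lipschitz ones, apply the commutator Lemma~\ref{lmDL} with $p=2$, and conclude by closedness of $S$. The only cosmetic difference is how you get the divergence form of $-S_0u_h$: you use the product rule and a.e.\ solenoidality, whereas the paper reads it off from $S^*u_h=-S_0u_h$ and the distributional formula for $S^*$.
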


\begin{proof}
We are going to demonstrate that the space $H_c\cap D(S^*)\subset D(S)$. Choosing $u=u(x)\in H_c\cap D(S^*)$, we consider the sequence of averaged vector-functions $u_h(x)=u*\rho_h(x)\in C_0^1(\R^n,\R^m)=D(S_0)$. If $\supp u(x)$ is contained in a ball $B_R(0)$ then $\supp u_h\subset B_{R+1}(0)$ for all $h\in (0,1)$ and the vectors $v=S^*u$, $v^h\doteq S^* u_h=-S_0u_h$ do not depend on matrices $A^k(x)$ for $|x|>R+1$. By our assumptions these matrices are Lipschitz continuous in $B_{R+1}(0)$ and can be extended to Lipschitz continuous symmetric matrices on the whole space $\R^n$. Hence, without loss of generality we may initially suppose that the matrices $A^k(x)$ satisfy the global Lipschitz condition in $\R^n$.
It then follows from Lemma~\ref{lmDL} that for all $i=1,\ldots,m$
\[(v^h-v_h)_i(x)=\sum_{k=1}^n \sum_{j=1}^m \frac{\partial}{\partial x_k}[(a^k_{ij}u_j)_h(x)-a^k_{ij}(x)(u_j)_h(x)]
\mathop{\to}_{h\to 0} 0 \ \mbox{ in } L^2(\R^n). \]
This implies that $v^h-v_h\to 0$ as $h\to 0$ in the space $H$. On the other hand, $u_h\mathop{\to}\limits_{h\to 0} u$,  $v_h\mathop{\to}\limits_{h\to 0} v$ in $H$ by the property of averaged functions. Hence, $u_h\to u$, $v^h=-Su_h\to v$ as $h\to 0$ in $H$. Since the operator $S$ is closed, we conclude that $u\in D(S)$ and $-Su=v$. We have proved the required inclusion $H_c\cap D(S^*)\subset D(S)$. Since the operator $S$ is closed, the closure of $H_c\cap D(S^*)$ in $D(S^*)$ with respect to the graph norm also lies in $D(S)$. But by Lemma~\ref{lem1} this closure coincides with $D(S^*)$, and we obtain that $D(S^*)\subset D(S)\subset D(S^*)$. Thus, $D(S)=D(S^*)$, therefore $-S=S^*$ and $S$ is a skew-adjoint operator. The proof is complete.
\end{proof}

It follows from Theorem~\ref{thM} and results of \cite{Zap} that the problem \eqref{1}, \eqref{2} is well-posed.

\begin{theorem}\label{wp} Under the assumptions of Theorem~\ref{thM} there exists a unique g.s. $u(t,x)$ of problem \eqref{1}, \eqref{2}. This solution admits the representation $u(t,x)=(e^{-tS} u_0)(x)$ and satisfies the conservation of energy property $\|u(t,\cdot)\|_H=\|u_0\|_H$.
\end{theorem}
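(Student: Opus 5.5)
Theorem~\ref{wp} follows by combining Theorem~\ref{thM} with the abstract results of \cite{Zap} recalled in the Introduction; I will spell out the existence and uniqueness parts separately.

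\emph{Existence.} By Theorem~\ref{thM}, $S^*=-S$. Hence $-S$ is itself a maximal dissipative operator. Indeed, it is dissipative, since $(-Su,u)_H=0$ by skew-symmetry, which passes from $S_0$ to its closure. It is maximal because any dissipative extension $G$ of $-S$ satisfies $-S\subset G\subset S^*=-S$. So the extension $G$ from the Introduction must equal $-S$. By Stone's theorem, $-S$ is the generator of a $C_0$-group $e^{-tS}$, $t\in\R$, of orthogonal operators on $H$, since $S$ is skew-adjoint. By \cite[Theorem~2.1]{Zap}, the trajectory $u(t,\cdot)=e^{-tS}u_0$ is a g.s. of problem \eqref{1}, \eqref{2} in the sense of Definition~\ref{def1}. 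Since each $e^{-tS}$ is orthogonal, $\|u(t,\cdot)\|_H=\|u_0\|_H$, and in particular $u\in L^\infty(\R_+,H)$.

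\emph{Uniqueness.} I will invoke \cite[Corollary~3.1]{Zap}: uniqueness of g.s. holds if and only if $S$ is skew-adjoint. Should a self-contained argument be wanted, take the difference $w$ of two g.s., which is a g.s. with zero initial data. Fix $t_0>0$ and $\varphi\in D(S)$, and let $f(t)=\chi(t)e^{(t_0-t)S}\varphi$ with a cutoff $\chi$ in time. Then $f$ solves the adjoint (backward) equation $f_t=Sf$, i.e. $f_t+S^*f=0$ in the appropriate sense. The step needing care is that such an $f$ is not in $C_0^1$, so it is not directly admissible in Definition~\ref{def1}. I would first extend the weak identity from $C_0^1$ test functions to test functions $f\in C^1_0([0,\infty),H)\cap C_0([0,\infty),D(S))$, using density of $C_0^1(\R^n,\R^m)$ in $D(S)$ in the graph norm together with the identity $\sum_k A^k u\cdot f_{x_k}=u\cdot S_0 f$. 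With this extension, testing against $f$ gives $\int w(t)\cdot \varphi\,dt$-type identities, letting $\chi\to\mathbf 1_{[0,t_0]}$ yields $(w(t_0),\varphi)_H=0$ for a.e. $t_0$. By density of $D(S)$ in $H$ it follows that $w=0$. This extension of test functions is the main technical point; I expect it to be exactly the content of \cite{Zap}, so in the end I would simply cite it.
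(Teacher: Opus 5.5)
Your proposal is correct and takes the same route as the paper: Theorem~\ref{thM} gives $S^*=-S$, so the maximal dissipative extension $G$ from the Introduction must equal $-S$; existence, the representation $u(t)=e^{-tS}u_0$, energy conservation and uniqueness then come from \cite[Theorem~2.1]{Zap} and \cite[Corollary~3.1]{Zap}, which is all the paper does. One sign slip in your optional self-contained uniqueness sketch: $f(t)=e^{(t_0-t)S}\varphi$ satisfies $f_t=-Sf$, i.e.\ $f_t+Sf=0$ (equivalently $f_t=S^*f$), which is exactly what the weak identity $\int_0^\infty (u,f_t+Sf)_H\,dt+(u_0,f(0))_H=0$ requires, not $f_t=Sf$ as you wrote.
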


We underline that existence of g.s. remains valid in the case $a^k_{ij}(x)\in L^2_{loc}(\R^n)$ (see Introduction).
But, in this  general case there are numerous examples of non-uniqueness of g.s. even for scalar transport equations, see for instance \cite{Aiz,Br,CLR,Dep,PaTr}, so that the operator $S$ may fail to be skew-adjoint, see also Example~\ref{Ex} below.

\begin{remark}\label{rem1}
In the case $m=1$ the problem \eqref{1}, \eqref{2} has the form
\begin{equation}\label{1-1}
u_t+a(x)\cdot\nabla u(x)=0, \quad u(0,x)=u_0(x)\in L^2(\R^n),
\end{equation}
where $a(x)=(a_1(x),\ldots,a_n(x))$ is a solenoidal vector of coefficients. This problem was studied by DiPerna and Lions in their seminal paper \cite{DiL}. It follows from \cite[Corollary~II.1]{DiL} that under the assumptions
\begin{equation}\label{reqDL}
a(x)\in W^1_{2,loc}(\R^n,\R^n), \quad (1+|x|)^{-1}|a(x)|\in L^1(\R^n)+L^\infty(\R^n)
\end{equation}
there exists a unique g.s. of problem \eqref{1}, \eqref{2}. Since the requirements (\ref{reqDL}) remains valid after the change $a\to -a$, the same result is true for the backward Cauchy problem (in the domain $t<0$). According to \cite[Corollary~3.1]{Zap} the operator $S$ is skew-adjoint. We see that in the scalar case $m=1$ the result of Theorem~\ref{thM} holds under much weaker requirements (\ref{reqDL}) than in the vectorial case. This is connected with
possibility in the scalar case to apply the renormalization method.
\end{remark}

Remark also that the growth condition is essential for the statement of Theorem~\ref{thM} even in the scalar case $m=1$.
Let us confirm it by the following simple example.

\begin{example}\label{Ex}
We consider equation \eqref{1-1} with $n=2$, $a=a(x,y)=(x^2,-2xy)$ (notice that $\div a=0$, as required). Let $S$ be a closure of the operator $S_0u=x^2u_x-2xyu_y$. As is easy to verify, for each $v(z)\in C_0^\infty(\R)$ the functions
\begin{equation}\label{ex1}
u(x,y)=\left\{\begin{array}{lcr} v(yx^2)e^{\frac{1}{hx}} & , & hx<0, \\ 0 & , & hx\ge 0 \end{array}\right.
\end{equation}
are solutions of the resolvent equation $u-hS^*u=0$, $h\in\R\setminus\{0\}$. Moreover,
$u(x,y)\in C^\infty(\R^2)\cap L^\infty(\R^2)\cap L^2(\R^2)$, $\|u\|_\infty=\|v\|_\infty$, $\|u\|_2=\|v\|_2\sqrt{|h|/2}$.
By the arbitrariness of $v(z)$, the operators $E-hS^*$ have infinite-dimensional kernels. Therefore, $\codim\Im(E-hS)=\dim\ker(E-hS^*)=\infty$ and the operator $S$ has the same infinite deficiency indexes. Hence, this operator is not skew-adjoint (but it admits infinitely many skew-adjoint extensions).

For the modified coefficients $a=a(x,y)=((x_+)^2,-2x_+y)$, with $x_+=\max(x,0)$, functions
(\ref{ex1}) lie in $\ker(E-hS^*)$ only for $h<0$. If $h>0$, this kernel is trivial. This means that the deficiency indexes $d_\pm=\codim\Im(E\pm S)$ of the operator $S$ are as follows: $d_-=0$, $d_+=\infty$. In particular, $S$ is a maximal skew-symmetric operator, which is not skew-adjoint.
\end{example}

\end{document}